\newtheorem{thm}{Theorem}[section]
\newtheorem{Con}[thm]{Conjecture}
\newtheorem{cor}[thm]{Corollary}
\newtheorem{lem}[thm]{Lemma}
\newtheorem{pro}[thm]{Proposition}
\theoremstyle{definition}
\numberwithin{equation}{section}
\newcommand{\re}{\textup{Re}}
\newcommand{\im}{\textup{Im}}
\begin{document}

\baselineskip=17pt

\title{Extreme values of class numbers of real quadratic fields}

\author[Youness Lamzouri]{Youness Lamzouri}

\address{Department of Mathematics and Statistics,
York University,
4700 Keele Street,
Toronto, ON,
M3J1P3
Canada}

\email{lamzouri@mathstat.yorku.ca}

\date{}

\begin{abstract}  
We improve a result of H. L. Montgomery and J. P. Weinberger by establishing the existence of infinitely many fundamental discriminants $d>0$ for which the class number of the real quadratic field $\mathbb{Q}(\sqrt{d})$ exeeds $(2e^{\gamma}+o(1)) \sqrt{d}(\log\log d)/\log d$. We believe this bound to be best possible. We also obtain upper and lower bounds of nearly the same order of magnitude, for the number of real quadratic fields with discriminant $d\leq x$ which have such an extreme class number.
\end{abstract}

\subjclass[2010]{Primary 11R11, 11M20}

\maketitle

\section{Introduction}

An important problem in number theory is to understand the size of the class number of an algebraic number field. The case of a quadratic field has a long history going back to Gauss.  Let $d$ be a fundamental discriminant and $h(d)$ be the class number of the field
$\mathbb{Q}(\sqrt{d})$. When $d<0$, in which case $\mathbb{Q}(\sqrt{d})$ is imaginary quadratic,  J. E. Littlewood \cite{Li} established, assuming the Generalized Riemann Hypothesis GRH, that 
\begin{equation}\label{BoImGRH}
h(d)\leq \left(\frac{2e^{\gamma}}{\pi}+o(1)\right) \sqrt{|d|}\log\log |d|,
\end{equation}
where $\gamma$ is the Euler-Mascheroni constant.
Littlewood used Dirichlet's class number formula,  which for $d<-4$, asserts that 
\begin{equation}\label{Dirichlet}
h(d)=\frac{\sqrt{|d|}}{\pi}\cdot L(1, \chi_d), 
\end{equation} 
where $\chi_d=\left(\frac{d}{\cdot}\right)$ is the Kronecker symbol. He then deduced \eqref{BoImGRH} from the bound
\begin{equation}\label{BoL1GRH}
L(1,\chi_d)\leq (2e^{\gamma}+o(1))\log\log |d|,
\end{equation}
which he obtained under GRH for all fundamental discriminants $d$. Furthermore, under the same hypothesis,  Littlewood \cite{Li} proved that there exist infinitely many fundamental discriminants $d$ (both positive and negative) for which 
\begin{equation}\label{OmegaL1}
L(1,\chi_d)\geq (e^{\gamma}+o(1))\log\log |d|,
\end{equation}
and hence for those $d<0$, one has
\begin{equation}\label{OmegaImaginary}
 h(d)\geq \left(\frac{e^{\gamma}}{\pi}+o(1)\right) \sqrt{|d|}\log\log |d|,
\end{equation}
by the class number formula \eqref{Dirichlet}.
The omega result \eqref{OmegaL1} was later established unconditionally by S. Chowla \cite{Ch}. 

The case of a real quadratic field is notoriously  difficult,  due to the presence of non-trivial units in $\mathbb{Q}(\sqrt{d})$. For example,  Gauss's conjecture that there are infinitely many positive discriminants $d$ for which $h(d)=1$ is still open. When $d$ is positive, the class number $h(d)$ is heavily affected by the size of the regulator $R_d$ of $\mathbb{Q}(\sqrt{d})$. In this case,  Dirichlet's class number formula asserts that  
\begin{equation}\label{ClassReal}
h(d)= \frac{\sqrt{d}}{R_d}\cdot L(1, \chi_d).
\end{equation}
Recall that $R_d=\log \varepsilon_d$ where $\varepsilon_d$ is the fundamental unit of the quadratic field  $\mathbb{Q}(\sqrt{d})$, defined as  
$\varepsilon_d= (a+b\sqrt{d})/2,$
where $b>0$ and $a$ is the smallest positive integer such that $(a,b)$ is a solution to the Pell equations $m^2-dn^2=\pm 4$. Since $\varepsilon_d>\sqrt{d}/2$, it follows that when $d$ is large we have 
\begin{equation}\label{LowerRegulator}
R_d\geq \left(\frac12+o(1)\right) \log d,
\end{equation} and hence by Littlewood's bound \eqref{BoL1GRH} we deduce that
\begin{equation}\label{}
h(d) \leq \big(4 e^{\gamma} +o(1)\big) \sqrt{d}\cdot \frac{\log\log d}{\log d}.
\end{equation}
for all positive fundamental discriminants $d$, under the assumption of GRH. In 1977, H. L. Montgomery and J. P. Weinberger \cite{MoWe} showed that this bound cannot be improved, apart from the value of the constant. Indeed, they proved that there exist infinitely many real quadratic fields $\mathbb{Q}(\sqrt{d})$ such that 
\begin{equation}\label{MW1977}
h(d)\gg \sqrt{d}\cdot \frac{\log\log d}{\log d}.
\end{equation}
Recently, W. Duke investigated generalizations of this result to higher degree number fields. In \cite{Du2}, he obtained the corresponding  omega result for the class number of abelian cubic fields, while in \cite{Du1}, assuming certain hypotheses (including GRH), he obtained similar results in the case of totally real number fields of a fixed degree whose normal closure has the symmetric group as Galois group.

It is widely believed that the true nature of extreme values of $L(1,\chi_d)$ is given by the omega result \eqref{OmegaL1} rather than the GRH bound \eqref{BoL1GRH}. A. Granville and K. Soundararajan \cite{GrSo2} investigated the distribution of large values of $L(1, \chi_d)$ and their results give strong support to this conjecture. In view of \eqref{LowerRegulator} and the class number formula \eqref{ClassReal}, this leads to the following conjecture
\begin{Con}\label{max}
For all large positive fundamental discriminants $d$ we have 
\begin{equation}\label{ConjBound}
h(d)\leq \big(2 e^{\gamma} +o(1)\big) \sqrt{d}\cdot \frac{\log\log d}{\log d}.
\end{equation}
\end{Con}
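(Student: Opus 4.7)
The natural strategy is to combine the two ingredients already in hand with one conjectural analytic input. Starting from the class number formula \eqref{ClassReal} and the regulator lower bound \eqref{LowerRegulator}, one gets immediately
\[
h(d) \leq \big(2+o(1)\big)\frac{\sqrt{d}}{\log d}\,L(1,\chi_d)
\]
for all large positive fundamental discriminants $d$. Hence \eqref{ConjBound} is equivalent to the sharp conjectural upper bound
\[
L(1,\chi_d) \leq \big(e^{\gamma}+o(1)\big)\log\log d,
\]
which is the ``true'' upper bound matching the unconditional omega result \eqref{OmegaL1} and improves Littlewood's GRH bound \eqref{BoL1GRH} by the crucial factor of $2$. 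So the whole content of Conjecture \ref{max} is a sharpening of \eqref{BoL1GRH} by a factor of two, applied at exactly those discriminants whose regulator is near the minimum $\tfrac12 \log d$.

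\textbf{Attacking the sharp $L(1)$-bound.} For this step I would start from the GRH-truncated Euler product expansion
\[
\log L(1,\chi_d) = \sum_{p \leq y} \frac{\chi_d(p)}{p} + O(1),
\]
valid for $y$ up to a small power of $d$. Littlewood's factor of $2$ arises because the effective range recovering $L(1,\chi_d)$ on GRH is $y = (\log d)^{2+\varepsilon}$, while the conjectured constant $e^{\gamma}$ corresponds, via Mertens' theorem $\sum_{p\leq y} 1/p \sim \log\log y$, to an effective truncation at $y = \log d \cdot (\log\log d)^{O(1)}$. The reduction therefore asks one to show that
\[
\sum_{\log d\,<\,p\,\leq\,(\log d)^{2+\varepsilon}} \frac{\chi_d(p)}{p} = o(\log\log d)
\]
for \emph{every} large fundamental discriminant $d$. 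The Granville--Soundararajan random model \cite{GrSo2}, in which $\chi_d(p)$ is treated as an independent uniform $\pm 1$ at each prime $p$, predicts this cancellation with overwhelming probability and provides the heuristic support for Conjecture \ref{max}.

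\textbf{Main obstacle.} Turning the Granville--Soundararajan heuristic into a deterministic theorem is the genuine open problem. One would need to rule out every ``conspiracy'' in which $\chi_d(p) = +1$ on a positive proportion of the primes in the logarithmic window $(\log d, (\log d)^{2+\varepsilon}]$; nothing of this strength is known even under GRH, since equidistribution of $\chi_d$ on primes in such a short window would require either a substantial quantitative strengthening of GRH (with effective control over low-lying zeros of $L(s,\chi_d)$ in the $d$-aspect), or an entirely new analytic input bypassing zeros. This is the reason Conjecture \ref{max} remains a conjecture, and it is also why the realistic target of this paper is not the upper bound \eqref{ConjBound} itself but a matching lower bound confirming that the constant $2e^{\gamma}$ is already attained infinitely often.
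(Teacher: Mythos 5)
This statement is a conjecture: the paper offers no proof of it, only the same motivation you give, namely combining the class number formula \eqref{ClassReal} and the regulator lower bound \eqref{LowerRegulator} with the conjecturally sharp bound $L(1,\chi_d)\le (e^{\gamma}+o(1))\log\log d$ suggested by the omega result \eqref{OmegaL1} and the Granville--Soundararajan heuristics, so your reduction and your identification of the open content coincide with the paper's discussion. The only nitpick is that \eqref{ConjBound} is \emph{implied by}, rather than strictly equivalent to, that sharp $L(1)$ bound, since \eqref{ConjBound} only constrains $L(1,\chi_d)$ at discriminants whose regulator is near the minimum $\tfrac12\log d$.
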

In this paper, we prove the existence of infinitely many real quadratic fields $\mathbb{Q}(\sqrt{d})$ for which  the class number $h(d)$ is as large as the conjectured upper bound \eqref{ConjBound}. We also obtain upper and lower bounds of nearly the same order of magnitude,  for the number of real quadratic fields with discriminant $d\leq x$ for which the class number is that large.
\begin{thm}\label{Main}
Let $x$ be large.
\begin{itemize}
\item[(a)] There are at least $x^{1/2-1/\log\log x}$ real quadratic fields $\mathbb{Q}(\sqrt{d})$ with discriminant  $d\leq x$, such that 
\begin{equation}\label{Extreme}
h(d)\geq  (2e^{\gamma}+o(1))\sqrt{d}\cdot \frac{\log\log d}{\log d}.
\end{equation}
\item[(b)]
Furthermore, there are at most $x^{1/2+o(1)}$ real quadratic fields $\mathbb{Q}(\sqrt{d})$ with discriminant $d\leq x$, for which \eqref{Extreme} holds. 
\end{itemize}
\end{thm}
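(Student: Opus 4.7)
The strategy is to prove the two parts by complementary methods: part (a) by a sieve construction inside the Montgomery--Weinberger family of small-regulator discriminants, and part (b) by reducing the count to solutions of Pell's equation.

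For part (a), I plan to work with the family $d_n = 4n^2+1$ for $n \in [\sqrt{x}/4,\sqrt{x}/2]$. Because $(2n,1)$ solves $a^2 - d_n b^2 = -1$, the fundamental unit satisfies $\varepsilon_{d_n} = 2n + \sqrt{d_n} \ll \sqrt{d_n}$, and together with \eqref{LowerRegulator} this gives $R_{d_n} = (\tfrac12 + o(1))\log d_n$. Restricting to the positive-density subset where $d_n$ is squarefree yields $\gg \sqrt{x}$ distinct fundamental discriminants with the correct regulator asymptotic. The core of the argument is to sieve this family so that $L(1,\chi_{d_n})$ is as close as possible to the conjectural maximum $e^\gamma \log\log d_n$. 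Choose $y = \log x/(\log\log x)^2$ and restrict to $n$ such that $\chi_{d_n}(p) = +1$ for every prime $p \le y$. A direct computation (or the Weil bound) shows that $\chi_{d_n}(p) = \bigl(\tfrac{4n^2+1}{p}\bigr) = +1$ holds for exactly $(p-1)/2$ residue classes of $n$ modulo $p$, so by the Chinese Remainder Theorem the number of surviving $n$ is at least
\begin{equation*}
\gg \sqrt{x}\cdot \prod_{p \le y}\tfrac{p-1}{2p} \;\ge\; \sqrt{x}\cdot 2^{-\pi(y)} \;\ge\; x^{1/2 - 1/\log\log x}
\end{equation*}
by the prime number theorem and the choice of $y$.

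For each such $n$, Mertens's theorem gives
\begin{equation*}
\prod_{p \le y}\left(1 - \frac{\chi_{d_n}(p)}{p}\right)^{-1} = \prod_{p \le y}\left(1 - \frac{1}{p}\right)^{-1} = (e^\gamma + o(1))\log\log d_n,
\end{equation*}
so it remains to show that the tail $\prod_{p > y}(1 - \chi_{d_n}(p)/p)^{-1}$ is $1+o(1)$ for a positive proportion of the sieved $n$. This I would handle by a second-moment calculation: the Weil bound for incomplete character sums of the polynomial $4X^2+1$ gives $\sum_{n}\chi_{d_n}(pq) \ll \sqrt{pq}$ for distinct $p,q > y$, so orthogonality yields $\operatorname{Var}\bigl(\sum_{p > y}\chi_{d_n}(p)/p\bigr) \ll 1/y = o(1)$, and Chebyshev's inequality extracts the required proportion. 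The class number formula \eqref{ClassReal} then produces \eqref{Extreme} for at least $x^{1/2 - 1/\log\log x}$ values of $d_n$.

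For part (b), if $h(d)$ satisfies \eqref{Extreme} then \eqref{ClassReal} gives $L(1,\chi_d)/R_d \ge (2e^\gamma + o(1))\log\log d/\log d$. Combining this with the upper bound $L(1,\chi_d) \le (2e^\gamma + o(1))\log\log d$---valid under GRH by \eqref{BoL1GRH}, and valid unconditionally outside a set of size $\ll x^{o(1)}$ of fundamental discriminants $d \le x$ via the moment/distribution estimates of Granville and Soundararajan \cite{GrSo2}---forces $R_d \le (1+o(1))\log d$, i.e.\ $\varepsilon_d \le d^{1+o(1)}$. Each such $d$ is recovered from its fundamental unit $(a,b)$ via $d = (a^2\mp 4)/b^2$, with $b \le d^{1/2 + o(1)}$ and $a \le b\sqrt{x}+O(1)$, and a divisor-bound count yields
\begin{equation*}
\sum_{b \le x^{1/2+o(1)}} \#\{a \le b\sqrt x : a^2 \equiv \pm 4 \pmod{b^2}\} \;\ll\; \sqrt{x}\sum_{b \le x^{1/2+o(1)}}\frac{\tau(b^2)}{b} \;\ll\; x^{1/2}(\log x)^{O(1)},
\end{equation*}
which is $x^{1/2+o(1)}$.

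The main obstacle I anticipate is the tail control in part (a): verifying that after imposing strong quadratic-residue conditions on $\chi_{d_n}(p)$ for $p \le y$, the character values at primes $p > y$ remain sufficiently orthogonal for the Euler tail to concentrate at $1+o(1)$. This should reduce to careful Weil-type bounds for incomplete character sums attached to $4X^2+1$ on the sieved residue classes, but any correlation between the sieve conditions at small primes and the character values at moderately large primes would need to be handled with care.
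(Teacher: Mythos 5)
Your overall architecture is the right one (the Chowla family $d=4n^2+1$ with $\varepsilon_d=2n+\sqrt d$, a sieve forcing $\chi_d(p)=+1$ for $p\le y$, and a Pell-equation count for part (b), which is essentially the paper's Lemma \ref{PellCount}), but both parts have genuine gaps. In part (a), the decisive problem is that your sieved subfamily is extremely sparse: requiring $\chi_{d_n}(p)=+1$ for all $p\le y$ leaves only about $\sqrt{x}\,e^{-(\log 2+o(1))\pi(y)}$ values of $n$, i.e.\ a proportion $x^{-c/\log\log x}$ (or $x^{-c/(\log\log x)^3}$ with your choice of $y$) of all $n\le\sqrt x$, not a positive proportion. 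A second moment of the tail sum over the \emph{full} family, combined with Chebyshev, only shows that the exceptional set where the tail is not $o(1)$ has relative density about $1/(\delta^2 y\log y)$, which is vastly larger than the density of your sieved set; so Chebyshev cannot ``extract the required proportion.'' This is exactly why the paper proves Proposition \ref{LargeMoments}, a bound for the $2k$-th moment with $k\asymp\log x/\log\log x$, powered by the complete character sum evaluation of Lemma \ref{Key} (Jacobsthal sums), so that the exceptional set is $\ll\sqrt x\,\theta^k$ with $\theta\to0$, which is smaller than the sieved count from Corollary \ref{MoWe}. A second, independent gap: your ``tail'' runs over \emph{all} primes $p>y$, and no Weil/completion bound controls the range where $p$ (or $pq$) exceeds a small power of $x$; unconditionally one must first truncate the Euler product at $(\log x)^{A}$ for almost all $d$, which requires zero-density input such as Proposition \ref{GranvilleSound} (Granville--Soundararajan) --- your sketch contains no such step. (Minor further points: the count of admissible residue classes mod $p$ must be combined with the squarefreeness of $4n^2+1$, as in Montgomery--Weinberger's Lemma \ref{MontgomeryWeinberger}, and $\prod_{p\le y}\frac{p-1}{2p}$ is $2^{-\pi(y)}/\!\log y$ up to constants, not $\ge 2^{-\pi(y)}$; these are repairable.)

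In part (b), the Pell/divisor count is fine and matches the paper, but the other half of your argument rests on the claim that $L(1,\chi_d)\le(2e^{\gamma}+o(1))\log\log d$ holds unconditionally for all but $x^{o(1)}$ fundamental discriminants $d\le x$ ``via the moment/distribution estimates of Granville and Soundararajan.'' No such result follows from \cite{GrSo2}: their Euler-product approximation (Proposition \ref{GranvilleSound}) over primes up to $(\log x)^A$ has an exceptional set $x^{2/A+o(1)}$, and to reach the constant $2e^{\gamma}$ one must take $A=2$, making that exceptional set trivially of size $x$, while their distribution asymptotics for large values only reach $\tau$ of size roughly $\log\log\log x$, far below the $2e^\gamma$ threshold. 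The unconditional ingredient the paper needs here is Heath-Brown's quadratic large sieve, used in Proposition \ref{AlmostAllShort} to improve the exceptional set to $x^{1/A+o(1)}$ for real characters, so that $A=2$ gives the bound \eqref{ALMOSTBoundL1} outside at most $x^{1/2+o(1)}$ discriminants --- and this $x^{1/2+o(1)}$ is precisely the bottleneck giving the exponent in part (b). Appealing to \eqref{BoL1GRH} is not an option since the theorem is unconditional. As written, your part (b) therefore proves the statement only under GRH, or with an unsupported almost-all hypothesis; supplying the quadratic large sieve step (or an equivalent) is essential.
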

To prove the omega result \eqref{MW1977}, Montgomery and Weinberger worked over the following special family of fundamental discriminants, first studied by Chowla
$$ \mathcal{D}:=\{ d \text{ square-free of the form } d= 4n^2+1 \text{ for some } n\geq 1\}.$$
This family has the advantage that the regulator $R_d$ is as small as possible. Indeed, if $d=4n^2+1\in \mathcal{D}$, then the fundamental unit of $\mathbb{Q}(\sqrt{d})$ is 
$\epsilon_d= 2n+\sqrt{d}\leq 2 \sqrt{d}$, and hence $R_d= (1/2+o(1))\log d$. Therefore, the class number formula \eqref{ClassReal} implies that
\begin{equation}\label{ClassSpecial}
h(d)= (2+o(1))\frac{\sqrt{d}}{\log d} \cdot L(1, \chi_d)
\end{equation}
for $d\in \mathcal{D}$. Montgomery and Weinberger showed that there exist infinitely many $d\in \mathcal{D}$ for which $L(1,\chi_d)\gg \log\log d$, from which they deduced \eqref{MW1977}. 

Let $\mathcal{D}(x)$ denote the set of $d\in \mathcal{D}$ such that $d\leq x$. To establish the first part of Theorem \ref{Main}, we prove that there exist many fundamental discriminants  $d\in \mathcal{D}(x)$ for which $L(1,\chi_d)$ is as large as one could hope for, namely $\geq (e^{\gamma}+o(1))\log\log d$. Our argument uses elements from the work of Montgomery and Weinberger as well as some new ideas. 

The proof of the second part of  Theorem \ref{Main} relies on two main ingredients. First, using elementary methods we bound the number of real quadratic fields $\mathbb{Q}(\sqrt{d})$ with discriminant $d\leq x$ which have small regulator. Then, we combine Heath-Brown's quadratic large sieve \cite{HB}   with zero-density estimates to show that  Littlewood's GRH bound $L(1,\chi_d)\leq (2e^{\gamma}+o(1))\log\log d$ holds unconditionally for all but at most $x^{1/2+o(1)}$ fundamental discriminants $0<d<x$.

\section{Real quadratic fields with extreme class number: proof of Theorem \ref{Main}, part (a)}

To obtain large values of $L(1, \chi_d)$, a general strategy is to construct fundamental discriminants $d$ for which $\chi_d(p)=1$ for all the small primes $p$, typically up to $y=\log d$.  Montgomery and Weinberger \cite{MoWe} noticed that for square-free $d$ of the form $d=4n^2+1$, one has $\chi_d(p)=1$ for all the primes $p$ dividing $n$. Hence, this reduces the problem to estimating the number of  fundamental discriminants $d\in \mathcal{D}(x)$ for which $(d-1)/4$ is divisible by all the small primes. To this end they established the following lemma.
\begin{lem}[Lemma 1 of \cite{MoWe}]\label{MontgomeryWeinberger}
The number of  integers $d\leq x$ such that $d$ is square-free and $d=4n^2+1$ where $q \mid n$ equals
$$
\frac{\sqrt{x}}{2q}\prod_{p\nmid q}\left(1-\frac{2}{p^2}\right)+O\left(x^{1/3}\log x\right).
$$
\end{lem}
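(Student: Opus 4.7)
My plan is to reduce to a M\"obius-style inclusion--exclusion over the squarefree condition, and to handle the deep tail by recasting the divisibility $k^2\mid 4q^2m^2+1$ as solvability of a Pell-type equation. Setting $n=qm$, the hypotheses $d=4n^2+1\leq x$ and $q\mid n$ become simply $1\leq m\leq M:=\sqrt{x-1}/(2q)$, and I must count those $m$ for which $f(m):=4q^2m^2+1$ is squarefree. Writing $\mu^2(f(m))=\sum_{k^2\mid f(m)}\mu(k)$ and interchanging orders gives
$$
S \;:=\; \sum_{1\leq m\leq M}\mu^2(f(m)) \;=\; \sum_{k\leq\sqrt{x}}\mu(k)\,A_k(M),\qquad A_k(M):=\#\{m\in[1,M]:k^2\mid f(m)\},
$$
which I would split at the cutoff $K:=x^{1/3}$.

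In the range $k\leq K$ the standard approximation $A_k(M)=\rho(k^2)M/k^2+O(\rho(k^2))$ applies, where $\rho(k^2)$ counts residues modulo $k^2$ satisfying $k^2\mid f(m)$. This density is multiplicative in squarefree $k$; it vanishes at $p\mid 2q$ (since $f$ is odd and $\equiv 1\pmod{p}$ when $p\mid q$), while for an odd prime $p\nmid q$ Hensel lifting shows $\rho(p^2)=2$ when $-1$ is a square mod $p$ and $\rho(p^2)=0$ otherwise. In particular $\rho(k^2)\leq 2^{\omega(k)}$, so summing the main term and completing the sum to all $k\geq 1$ produces the Euler product appearing in the statement, at the cost of errors $O(K\log K)$ from the $O$-terms and $O(M\log K/K)$ from the tail of the completed sum, both $\ll x^{1/3}\log x$ for $K=x^{1/3}$.

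The main obstacle is controlling the deep tail $\sum_{K<k\leq\sqrt{x}}A_k(M)$, because a purely sieve-theoretic bound using $\rho(k^2)/k^2$ becomes useless once $k$ is close to $\sqrt{x}$. I would instead enumerate contributing triples $(m,k,t)$ with $k^2 t=f(m)$ and rewrite the relation as the Pell-type equation
$$
Y^2-tk^2=-1,\qquad Y=2qm,\quad 1\leq Y\leq\sqrt{x},\quad k>K.
$$
For each fixed $t$, if $t$ is a perfect square then only trivial solutions exist, and otherwise all positive solutions lie on a single orbit under the square of the fundamental unit of $\mathbb{Z}[\sqrt{t}]$ and thus grow geometrically, so at most $O(\log x)$ of them satisfy $Y\leq\sqrt{x}$, uniformly in $t$. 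Since $k>K$ forces $t\leq x/K^2$, summing yields a total of $O((x/K^2)\log x)=O(x^{1/3}\log x)$, matching the error in the statement. The uniformity of this Pell bound (in particular, a lower bound on the fundamental unit independent of $t$) is the delicate point: without it, the naive M\"obius argument alone would yield an error of order $\sqrt{x}$ coming from $k$ near $\sqrt{x}$.
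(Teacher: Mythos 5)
The paper itself gives no proof of this lemma; it is quoted as Lemma 1 of Montgomery--Weinberger, so the only comparison available is with that source, and your argument (expanding $\mu^2$ by M\"obius, splitting the square divisors at $K=x^{1/3}$, and handling the deep range $k>K$ through the negative Pell equation $Y^2-tk^2=-1$) is exactly the classical Estermann-style proof that Montgomery and Weinberger use. The quantitative skeleton is sound: after setting $n=qm$, the small range gives main term $M\sum_{k\le K}\mu(k)\rho(k^2)k^{-2}$ with error $\sum_{k\le K}O(\rho(k^2))\ll K\log K$, completing the sum costs $\ll M\log K/K\ll x^{1/6}\log x$, and in the deep range each $t\le (x+1)/K^2\ll x^{1/3}$ contributes $O(\log x)$ admissible $Y\le\sqrt{x}$, uniformly in $t$, since any solution has $Y+k\sqrt{t}\ge 1+\sqrt{2}$ and consecutive solutions grow by a factor at least $(1+\sqrt{2})^2$ (square $t$ giving no solutions with $Y\ge 1$). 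Summing over $t$ indeed yields $O(x^{1/3}\log x)$, matching the stated error term.

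The one genuine slip is the identification of the main-term constant. By your own computation of the local densities, $\rho(p^2)=0$ for $p\mid 2q$ and for $p\equiv 3\pmod 4$, while $\rho(p^2)=2$ only when $p\equiv 1\pmod 4$ and $p\nmid q$; hence the singular series your argument actually produces is $\prod_{p\nmid q,\ p\equiv 1\pmod 4}\left(1-2p^{-2}\right)$, which is not literally the product $\prod_{p\nmid q}\left(1-2p^{-2}\right)$ displayed in the statement, whose factors at $p=2$ and at $p\equiv 3\pmod 4$ are not equal to $1$. So the claim that completing the sum ``produces the Euler product appearing in the statement'' does not follow from what precedes it: the two constants differ by a bounded but nonzero factor. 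This is immaterial for how the lemma is used here (the corollary drawn from it only needs a lower bound of order $x^{1/2}e^{-y(1+o(1))}$, uniform in $q$), but in a self-contained proof you should either state and prove the asymptotic with the product restricted to $p\equiv 1\pmod 4$, $p\nmid q$, or explain explicitly the convention under which the quoted constant is to be read.
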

Taking $q=\prod_{p\leq y} p$, where $\sqrt{\log x}\leq y\leq (\log x)/8$ is a real number, and noting that $q=e^{y(1+o(1))}$ by the prime number theorem, yields 
\begin{cor}\label{MoWe}
Let $\sqrt{\log x}\leq y\leq (\log x)/8$ be a real number. The number of  fundamental discriminants  $d\in \mathcal{D}(x)$ such that $\chi_d(p)=1$ for all primes $p\leq y$ is at least
$
x^{1/2}e^{-y(1+o(1))}.
$
\end{cor}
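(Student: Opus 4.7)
The plan is to apply Lemma \ref{MontgomeryWeinberger} with the specific choice $q=\prod_{p\leq y} p$. By the prime number theorem, $\log q = \vartheta(y) = y(1+o(1))$, so $q = e^{y(1+o(1))}$. Before invoking the lemma, I would check that the divisibility condition $q\mid n$ guarantees both that $d = 4n^2+1$ is a fundamental discriminant and that $\chi_d(p) = 1$ for every prime $p\leq y$. Since $d\equiv 1\pmod 4$ is square-free, it is automatically a fundamental discriminant, so $\chi_d$ coincides with the Kronecker symbol attached to $d$. For each odd prime $p\mid n$ the congruence $d\equiv 1\pmod p$ gives $\chi_d(p) = \left(\tfrac{1}{p}\right) = 1$, while $2\mid n$ forces $d\equiv 1\pmod 8$, yielding $\chi_d(2)=1$ as well.

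Next I would extract the lower bound from the main term of Lemma \ref{MontgomeryWeinberger}. The Euler-type product
$$\prod_{p\nmid q}\left(1-\frac{2}{p^2}\right) = \prod_{p>y}\left(1-\frac{2}{p^2}\right)$$
tends to $1$ as $y\to\infty$, and in particular is bounded below by an absolute positive constant. Hence the main term from the lemma is $\gg \sqrt{x}/q = x^{1/2}\,e^{-y(1+o(1))}$. To finish, I would verify that the error term $O(x^{1/3}\log x)$ is negligible in the prescribed range of $y$: the hypothesis $y\leq (\log x)/8$ ensures
$$x^{1/2}\,e^{-y(1+o(1))} \;\geq\; x^{1/2 - 1/8 - o(1)} \;=\; x^{3/8 - o(1)},$$
which comfortably dominates $x^{1/3}\log x$. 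Combining these observations produces the claimed lower bound.

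There is no genuine obstacle to this argument; the corollary is a direct specialization of Lemma \ref{MontgomeryWeinberger}, with the only ingredients being the prime number theorem to control $q$, a short elementary check turning $q\mid n$ into $\chi_d(p)=1$ for all $p\leq y$, and the trivial comparison of the main and error terms in the allowed range of $y$.
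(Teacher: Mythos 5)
Your proposal is correct and is essentially the paper's own argument: take $q=\prod_{p\leq y}p$ in Lemma \ref{MontgomeryWeinberger}, use the prime number theorem to get $q=e^{y(1+o(1))}$, and note that the main term dominates the error term since $y\leq(\log x)/8$. The extra verification that $q\mid n$ forces $\chi_d(p)=1$ for all $p\leq y$ (including $p=2$ via $d\equiv 1\pmod 8$) is the same observation of Montgomery and Weinberger that the paper invokes, so there is no substantive difference.
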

Montgomery and Weinberger then used zero density estimates to prove that for any $0<\delta<1$,  all but at most $x^{\delta}$ fundamental discriminants $1\leq d\leq x$ satisfy 
\begin{equation}\label{ZDShort}
\log L(1, \chi_d)=\sum_{p\leq y} \frac{\chi_d(p)}{p}+O_{\delta}(1),
\end{equation}
where $(\log x)^{\delta}\leq y\leq \log x$ is a real number. Taking $y= (\log x)/9$ and $\delta=1/4$ in \eqref{ZDShort}, and using Corollary \ref{MoWe} produces more than $x^{3/8}$ fundamental discriminants $d\leq x$ in $\mathcal{D}$ for which $L(1,\chi_d)\gg \log\log d.$

In order to improve this estimate, we first replace \eqref{ZDShort} with a better approximation to $L(1, \chi_d)$, due to Granville and Soundararajan  \cite{GrSo2}, which is obtained using zero density estimates together with the large sieve.
 \begin{pro}[Proposition 2.2 of \cite{GrSo2}]\label{GranvilleSound}
Let $A>2$ be fixed. Then, for all but at most $Q^{2/A+o(1)}$ primitive characters $\chi \pmod q$ with $q\leq Q$ we have 
$$ 
L(1,\chi)=\prod_{p\leq (\log Q)^A}\left(1-\frac{\chi(p)}{p}\right)^{-1}\left(1+O\left(\frac{1}{\log\log Q}\right)\right).
$$
\end{pro}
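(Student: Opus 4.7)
The plan is to obtain this short Euler-product approximation for most characters by combining two analytic ingredients: a zero-density estimate bounding the number of primitive Dirichlet characters whose $L$-function has a nontrivial zero close to the line $\re(s)=1$, and a contour-shift argument that converts a zero-free region into the desired approximation to $L(1,\chi)$. The exceptional set of $Q^{2/A+o(1)}$ characters will be precisely those $\chi$ whose $L$-function has a zero in a prescribed rectangle of the form $\{\re(s)\ge 1-\eta,\ |\im(s)|\le T\}$ for carefully chosen $\eta$ and $T$.

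For the analytic step, set $y=(\log Q)^A$ and start from
\[
\log L(1,\chi)-\log\prod_{p\le y}\left(1-\frac{\chi(p)}{p}\right)^{-1}=\sum_{p>y}\frac{\chi(p)}{p}+O(1/y),
\]
where the $O(1/y)$ absorbs the absolutely convergent tail of higher prime powers. Using Perron's formula with a smooth cutoff, I would express the tail on the right as a contour integral of $(-L'/L)(1+s,\chi)\cdot y^s G(s)/s$ along the line $\re(s)=c>0$. Shifting to $\re(s)=-\eta$ for small $\eta>0$, one picks up the pole at $s=0$ together with residues at the zeros of $L(s,\chi)$ lying in the strip $1-\eta\le\re(s)\le 1$, $|\im(s)|\le T$; assuming no such zeros exist, the shifted integral is bounded by $y^{-\eta}(\log QT)^{O(1)}$, and choosing $\eta$ and $T$ so that this is $O(1/\log\log Q)$ yields the desired approximation.

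For the zero-density step, I would use the large sieve inequality for Dirichlet characters, combined with the Halász-Montgomery-Huxley method, to obtain an estimate of the shape
\[
\sum_{q\le Q}\,\sum_{\chi\bmod q}^{*} N(\sigma,T,\chi)\ll (Q^{2}T)^{c(1-\sigma)}(\log QT)^{O(1)}
\]
for some absolute constant $c$, where $N(\sigma,T,\chi)$ counts zeros of $L(s,\chi)$ with real part $\ge\sigma$ and imaginary part of modulus $\le T$. Feeding in $\sigma=1-\eta$ from the analytic step and taking $T$ polylogarithmic in $Q$, the number of characters with a zero in the forbidden rectangle is at most $Q^{2c\eta+o(1)}$, which after calibration becomes $Q^{2/A+o(1)}$. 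The main obstacle is this calibration: the exponent $2/A$ is tight precisely because the factor $Q^{2}$ in the large sieve must pair with a zero-free width $\eta\sim 1/A$, so one must simultaneously control the $L'/L$ contour integral sharply enough (so that the analytic error lands at $1/\log\log Q$) and use the best possible zero-density exponent in $Q$ (so that the count lands at $2/A$ rather than a weaker $C/A$).
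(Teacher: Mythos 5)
First, a point of comparison: the paper does not prove this statement at all --- it is imported verbatim as Proposition 2.2 of Granville--Soundararajan \cite{GrSo2}, whose proof (as the introduction here notes) combines zero-density estimates \emph{with the large sieve}. Your outline --- a zero-free rectangle of width $\eta$ and polylogarithmic height $T$ yields the short Euler product by shifting the contour in an integral of $(-L'/L)(1+s,\chi)y^sG(s)/s$, and a density theorem counts the exceptional characters --- is the standard skeleton, and it is essentially how the weaker approximation \eqref{ZDShort} (and the draft Lemma \ref{ZeroDensity}, via Lemma \ref{ShortEuler}) is obtained. So the strategy is reasonable; the problem is that the exponent $Q^{2/A+o(1)}$ is the entire content of the proposition, and your calibration does not produce it.

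Concretely: the analytic step with $y=(\log Q)^A$ forces $y^{-\eta}(\log QT)^{O(1)}\ll 1/\log\log Q$, hence $\eta\geq (C+o(1))/A$ where $C\geq 1$ is the exponent of the logarithmic factor; with the Lemma \ref{ShortEuler}-type estimate, whose error is $\frac{\log q}{(\sigma_1-\sigma)^2}z^{\sigma_1-1}$ and $\sigma_1-1=-\eta/2$, you would in fact need $\eta>2/A$. Feeding $\sigma=1-\eta$ into a density bound of the shape $\sum_{q\le Q}\sum_{\chi}^{*}N(\sigma,T,\chi)\ll (Q^2T)^{c(1-\sigma)}(\log QT)^{O(1)}$ gives, by your own accounting, an exceptional set of size $Q^{2c\eta+o(1)}\geq Q^{2c/A+o(1)}$; since the Hal\'asz--Montgomery--Huxley estimates you invoke give at best $c=2+o(1)$ near $\sigma=1$ for this family, the sketch lands at $Q^{4/A+o(1)}$ (or $Q^{8/A+o(1)}$ with the $\eta>2/A$ requirement), not $Q^{2/A+o(1)}$. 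To reach $2/A$ one needs either a density input in which the modulus enters with conductor $QT$ rather than $Q^2T$ near $\sigma=1$ (a Jutila-type log-free estimate), or the additional large-sieve moment argument that Granville and Soundararajan actually employ; your final paragraph asserts that the calibration ``becomes $Q^{2/A+o(1)}$'' without supplying either ingredient, and with the estimate you state it cannot. (Two lesser remarks: the opening identity with $\sum_{p>y}\chi(p)/p$ is only formal, as that tail is not absolutely convergent and must be truncated inside the Perron argument; and for the application in this paper a weaker exponent $C'/A$ would in fact suffice since $A$ is at one's disposal --- but as a proof of the proposition as stated, the exponent is precisely what is missing.)
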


 Let $\sqrt{\log x}\leq y \leq  (\log x)/8$ be a real number. Then, note that 
\begin{equation}\label{FirstCond}
\prod_{p\leq (\log x)^A}\left(1-\frac{\chi_d(p)}{p}\right)^{-1}= \prod_{p\leq y}\left(1-\frac{\chi_d(p)}{p}\right)^{-1}\exp\left(\sum_{y<p<(\log x)^A}\frac{\chi_d(p)}{p} +O\left(\frac{1}{\sqrt{y}\log y}\right)\right).
\end{equation}
By Corollary \ref{MoWe}, there are many fundamental discriminants $d\in \mathcal{D}(x)$ for which the product  $\prod_{p\leq y}(1-\chi_d(p)/p)^{-1}$ is as large as possible. The key ingredient in the proof of  the first part of Theorem \ref{Main} is the following proposition which gives an upper bound for the $2k$-th moment of $\sum_{y<p<(\log x)^A}\chi_d(p)/p$ as $d$ varies in  $\mathcal{D}(x)$, uniformly for $k$ in a large range. In particular, we shall later deduce that with very few exceptions in $\mathcal{D}(x)$, the prime sum $\sum_{y<p<(\log x)^A}\chi_d(p)/p$ is small. 
\begin{pro}\label{LargeMoments}
Let $\sqrt{\log x}< y< \log x$ be a real number, and  $z=(\log x)^A$ where $A>2$ is a constant.  Then, for every positive integer $k\leq \log x/(8A\log\log x)$ we have 
$$\sum_{d \in \mathcal{D}(x)}\Bigg(\sum_{y< p< z}\frac{\chi_d(p)}{p}\Bigg)^{2k}\ll \sqrt{x} \left(\frac{ck}{y\log y}\right)^{k},$$
for some absolute constant $c>0$.
\end{pro}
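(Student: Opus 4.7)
The plan is the standard moment-method computation: expand the $2k$-th power, interchange the sums over $d \in \mathcal{D}(x)$ and over prime tuples $(p_1,\ldots,p_{2k}) \in (y,z)^{2k}$, group by the squarefree part of $p_1 \cdots p_{2k}$, and then bound the resulting character sums over the family $\mathcal{D}(x)$. Since $\chi_d$ is completely multiplicative and $\chi_d(p)^2 = 1$ whenever $p \nmid d$ (automatic here as $p > y \geq \sqrt{\log x}$ up to a negligible set of $d$), we have $\chi_d(p_1\cdots p_{2k}) = \chi_d(m)$, where $m$ is the product of those primes appearing to odd multiplicity among the $p_i$. This gives
\begin{equation*}
\sum_{d \in \mathcal{D}(x)} \Bigl(\sum_{y<p<z}\frac{\chi_d(p)}{p}\Bigr)^{2k} = \sum_m B(m)\, \mathcal{S}(m),
\end{equation*}
where $B(m)$ collects the multinomial and $1/(p_1\cdots p_{2k})$ factors from all tuples with squarefree part $m$, and $\mathcal{S}(m) := \sum_{d \in \mathcal{D}(x)} \chi_d(m)$.

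For the diagonal $m = 1$, $\mathcal{S}(1) \asymp \sqrt{x}$ and the usual pairing bound gives
\begin{equation*}
B(1) \leq (2k-1)!!\,\Bigl(\sum_{y<p<z} \frac{1}{p^2}\Bigr)^k \ll \Bigl(\frac{ck}{y\log y}\Bigr)^k,
\end{equation*}
using $(2k-1)!! \leq (2k/e)^k$ together with $\sum_{p>y}p^{-2} \ll 1/(y\log y)$. This produces exactly the target bound $\sqrt{x}(ck/(y\log y))^k$.

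For the off-diagonal terms $m>1$, I would exploit that $d = 4n^2+1 \equiv 1 \pmod 4$ and apply quadratic reciprocity to rewrite $\chi_d(m) = \left(\frac{m}{d}\right) = \left(\frac{d}{m}\right) = \left(\frac{4n^2+1}{m}\right)$ (all primes in $(y,z)$ are odd for large $x$). Handling the squarefree indicator $\mu^2(4n^2+1)$ via $\mu^2 = \sum_{b^2\mid\cdot}\mu(b)$ with a suitable truncation (in the spirit of Lemma~\ref{MontgomeryWeinberger}), the task reduces to bounding partial sums
\begin{equation*}
\sum_{n \leq N}\left(\frac{4n^2+1}{m}\right), \qquad N \asymp \sqrt{x}.
\end{equation*}
By the Chinese Remainder Theorem and the classical identity $\sum_{x\pmod p}\left(\frac{x^2+1}{p}\right) = -1$ for odd primes $p$, the full-period sum of length $m$ equals $(-1)^{\omega(m)}$; completing the partial sum yields $|\mathcal{S}(m)| \ll \sqrt{x}/m + m$. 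The hypothesis $k \leq \log x/(8A\log\log x)$ forces $m \leq z^{2k} \leq x^{1/4}$, so $|\mathcal{S}(m)| \ll \sqrt{x}/m$.

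It remains to show that $\sum_{m>1} B(m)\,\mathcal{S}(m)$ is absorbed into $\sqrt{x}(ck/(y\log y))^k$. The key inputs are the extra $1/m$ coming from $\mathcal{S}(m)$, the constraint $\omega(m) \geq 2$ (forced by $\sum r_p = 2k$ being even), and the lower bound $p \geq y$ for each prime factor of $m$, which together compensate for the larger combinatorial content of $B(m)$ in this regime. The main obstacle, as in the Montgomery--Weinberger argument, is precisely this off-diagonal bookkeeping: one must verify that the combined gains overcome the multiplicities allowed by permuting the $2k$ indices, and this is where the calibration of the range $k \leq \log x/(8A\log\log x)$ enters decisively, ensuring both $m \leq x^{1/4}$ for the character sum bound and that the contribution from terms with $\omega(m) \geq 2$ remains strictly smaller than the diagonal.
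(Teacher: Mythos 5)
Your strategy is essentially the paper's (expand the $2k$-th moment, evaluate the complete sums of $\left(\frac{4n^2+1}{\cdot}\right)$ by the Jacobsthal identity and CRT, complete the partial sums using $m\le z^{2k}\le x^{1/4}$, and let the extra saving in $m$ beat the combinatorics), but the proposal stops exactly where the proof has to be done. You reduce everything to showing that $\sum_{m>1}B(m)\mathcal{S}(m)$ is absorbed into $\sqrt{x}\,(ck/(y\log y))^k$, and then label this ``the main obstacle'' which ``one must verify'': that verification is the substance of the paper's argument. In the paper the expansion keeps the full modulus $m$ with $\Omega(m)=2k$ (not merely its squarefree part), Lemma \ref{Key} gives the uniform bound $\ll \sqrt{x}/m_0$ for every such $m$ (with $m_0$ the squarefree part), and the multinomial inequality \eqref{CombInequality} combined with \eqref{PrimeSumPower} yields $\sum_m b_{2k}(m;y,z)/(m m_0)\le 2^k (2k)!/k!\,\big(\sum_{y<p<z}p^{-2}\big)^k\ll (ck/(y\log y))^k$, handling diagonal and off-diagonal terms in one stroke. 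Your listed ingredients (the factor $1/m$ from $\mathcal{S}(m)$, all prime factors of $m$ exceeding $y$, the range of $k$) are the right ones, but without the combinatorial estimate the claimed absorption is an assertion, not a proof; as written, the argument is incomplete at its central step.

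Two further points. First, the identity $\sum_{d\in\mathcal{D}(x)}\big(\sum_p \chi_d(p)/p\big)^{2k}=\sum_m B(m)\mathcal{S}(m)$ is not exact: $\chi_d(p)^2=1$ only when $p\nmid d$, and the $d=4n^2+1\le x$ divisible by some prime in $(y,z)$ are not a negligible subfamily, so the even-multiplicity primes carry an indicator $\mathbf{1}_{(s,d)=1}$ (where $s^2$ is the square part) which correlates with $\chi_d(m)$; keeping the full modulus $m$, as the paper does, removes this issue. Second, the $\mu^2$-sieve you invoke to preserve the squarefreeness of $d$ is unnecessary and would force you to track completion and truncation errors summed against $B(m)$: since the summand is a $2k$-th power, positivity allows one to drop the squarefree condition outright and bound the moment by $\sum_{n\le\sqrt{x}}\big(\sum_{y<p<z}\left(\frac{4n^2+1}{p}\right)/p\big)^{2k}$, which is the paper's first (one-line) step in \eqref{bound}.
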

To prove this result we first need the following lemma, which gives a non-trivial bound for a certain character sum.
\begin{lem}\label{Key}
Let $q$ be an odd positive integer, and write $q=q_1^2q_0$ where $q_0$ is square-free.  If $x\geq q^2$ then
$$\sum_{n\leq x} \left(\frac{4n^2+1}{q}\right)\ll \frac{x}{q_0},$$ 
where $(\frac{\cdot}{q})$ is the Jacobi symbol modulo $q$. 
\end{lem}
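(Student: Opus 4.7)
The plan is to reduce the incomplete Jacobi sum to the complete sum $S(q) := \sum_{r=0}^{q-1}\left(\frac{4r^2+1}{q}\right)$, factor $S(q)$ via the Chinese Remainder Theorem, and evaluate each local factor. The desired saving by $1/q_0$ will come from the primes dividing $q$ to odd multiplicity, which are exactly the prime divisors of $q_0$.

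First, since $\left(\frac{4n^2+1}{q}\right)$ is periodic in $n$ of period $q$, I will write
$\sum_{n\leq x}\left(\frac{4n^2+1}{q}\right)=\lfloor x/q\rfloor\, S(q)+O(q).$
Using $x\geq q^2$ together with $q=q_1^2 q_0$, one has $q\leq q^2/q_0\leq x/q_0$, so the error term is absorbed into the claimed bound; it therefore suffices to prove $|S(q)|\leq q/q_0=q_1^2$. Writing $q=\prod_p p^{a_p}$, the multiplicativity of the Jacobi symbol in the lower entry combined with CRT gives $S(q)=\prod_{p\mid q}S(p^{a_p})$, where $S(p^a):=\sum_{n=0}^{p^a-1}\left(\frac{4n^2+1}{p^a}\right)$.

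Next, for each odd prime $p$ dividing $q$, I evaluate $S(p^a)$ in two cases. When $a$ is odd, $\left(\frac{m}{p^a}\right)=\left(\frac{m}{p}\right)$ depends only on $m\bmod p$, giving $S(p^a)=p^{a-1}\sum_{n\bmod p}\left(\frac{4n^2+1}{p}\right)$; the classical evaluation of the Legendre-symbol sum of a quadratic polynomial with nonzero discriminant $-16\not\equiv 0\pmod p$ yields $\sum_{n\bmod p}\left(\frac{4n^2+1}{p}\right)=-\left(\frac{4}{p}\right)=-1$, so $|S(p^a)|/p^a=1/p$. When $a$ is even, $\left(\frac{m}{p^a}\right)$ is the indicator of $\gcd(m,p)=1$, and Hensel lifting the at-most-two roots of $4n^2+1\equiv 0\pmod p$ gives $S(p^a)=p^{a-1}(p-r_p)$ with $r_p\in\{0,2\}$, so $|S(p^a)|/p^a\leq 1$. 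Since $q_0=\prod_{p:\,a_p\text{ odd}} p$, assembling the factors produces $|S(q)|/q\leq\prod_{p\mid q_0}(1/p)=1/q_0$, as required.

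The argument is essentially bookkeeping and presents no serious obstacle; the only real input is the classical identity $\sum_{n\bmod p}\left(\frac{4n^2+1}{p}\right)=-1$. The one point to track carefully is the dichotomy between odd and even exponents in $q=\prod p^{a_p}$: the odd-exponent primes form precisely $q_0$ and each contributes the decisive factor $1/p$, while the even-exponent primes (which make up $q_1^2$) contribute at worst $1$ because the Jacobi symbol there reduces to coprimality with $p$.
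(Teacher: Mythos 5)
Your proposal is correct and follows essentially the same route as the paper: reduce to the complete sum by periodicity, factor it by CRT and multiplicativity of the Jacobi symbol, get the crucial factor $1/p$ at each odd-exponent prime from the classical evaluation $\sum_{n\bmod p}\left(\frac{4n^2+1}{p}\right)=-1$ (a Jacobsthal-type sum), and bound the even-exponent factors trivially. Your slight refinement at even exponents (counting residues coprime to $p$) is harmless but not needed; the argument matches the paper's proof.
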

\begin{proof}
Let $f(n)=4n^2+1$. First we have 
\begin{equation}\label{reduction}
\sum_{n\leq x} \left(\frac{f(n)}{q}\right) = \sum_{a=1}^q\sum_{\substack{n\leq x\\ n\equiv a \bmod q}}\left(\frac{f(n)}{q}\right)= \sum_{a=1}^q\left(\frac{f(a)}{q}\right)
\sum_{\substack{n\leq x\\ n\equiv a \bmod q}}1= \frac{x}{q} \sum_{a=1}^q\left(\frac{f(a)}{q}\right) + O(q).
\end{equation}
Note that $\sum_{a=1}^q\left(\frac{f(a)}{q}\right)$ is a complete character sum, and hence if $q=p_1^{\alpha_1}\cdots p_k^{\alpha_k}$ is the prime factorization of $q$, then 
\begin{equation}\label{multiplicative}
\sum_{a=1}^q\left(\frac{f(a)}{q}\right)=\prod_{j=1}^k \left(\sum_{a_j=1}^{p_j^{\alpha_j}}\left(\frac{f(a_j)}{p_j^{\alpha_j}}\right)\right)= \prod_{j=1}^k \left(\sum_{a_j=1}^{p_j^{\alpha_j}}\left(\frac{f(a_j)}{p_j}\right)^{\alpha_j}\right),
\end{equation}
by multiplicativity and the Chinese Remainder theorem. Now, if $\alpha_j=2\beta_j$ is even we use the trivial bound
\begin{equation}\label{even}
\left|\sum_{a=1}^{p_j^{2\beta_j}}\left(\frac{f(a)}{p_j}\right)^{2\beta_j}\right|\leq p_j^{2\beta_j}.
\end{equation}
On the other hand, if $\alpha_j=2\beta_j+1$ is odd, then $\left(\frac{f(a_j)}{p_j}\right)^{\alpha_j}=\left(\frac{f(a_j)}{p_j}\right)$, and hence  
\begin{equation}\label{odd1}
\sum_{a=1}^{p_j^{2\beta_j+1}}\left(\frac{f(a)}{p_j}\right)^{2\beta_j+1}= \sum_{b=0}^{p_j^{2\beta_j}-1}\sum_{c=1}^{p_j}\left(\frac{f(bp_j+c)}{p_j}\right)=p_j^{2\beta_j}\sum_{c=1}^{p_j}\left(\frac{f(c)}{p_j}\right).
\end{equation}
If $p$ is a prime number,  then the sum $\sum_{n=1}^p \left(\frac{n^2+b}{p}\right)$ is a Jacobsthal sum, and it is known that  (see for example Storer \cite{St})
$$  \sum_{n=1}^p \left(\frac{n^2+b}{p}\right) =-1 \textup{ if } p\nmid b.$$
Therefore, we deduce
$$
\sum_{c=1}^{p_j}\left(\frac{4c^2+1}{p_j}\right)=-\left(\frac{4}{p_j}\right)=-1.
 $$
Inserting this estimate in \eqref{odd1} yields
\begin{equation}\label{odd2}
 \left|\sum_{a=1}^{p_j^{2\beta_j+1}}\left(\frac{f(a)}{p_j}\right)^{2\beta_j+1}\right|= p_j^{2\beta_j}.
 \end{equation}
Combining the bounds \eqref{even} and \eqref{odd2} in \eqref{multiplicative} yields 
$$ \left|\sum_{a=1}^q\left(\frac{f(a)}{q}\right)\right|\leq \frac{q}{q_0}. $$
The result follows upon inserting this bound in \eqref{reduction}.
\end{proof}

\begin{proof}[Proof of Proposition \ref{LargeMoments}]
As before, we let $f(n)=4n^2+1$.  By positivity of the summand we have
$$ 
\sum_{d \in \mathcal{D}(x)}\Bigg(\sum_{y< p< z}\frac{\chi_d(p)}{p}\Bigg)^{2k}\leq \sum_{n\leq \sqrt{x}}
\left(\sum_{y< p< z}\frac{\left(\frac{f(n)}{p}\right)}{p}\right)^{2k}.
$$
Expending the inner sum we obtain
\begin{equation}\label{Expansion}
\left(\sum_{y< p< z}\frac{\left(\frac{f(n)}{p}\right)}{p}\right)^{2k}= 
\sum_{m}\frac{b_{2k}(m;y, z)\left(\frac{f(n)}{m}\right)}{m}, 
\end{equation}
where 
\begin{equation}\label{Combinatorics}
b_{r}(m;y, z):= \sum_{\substack{y<p_1, \dots, p_{r}<z\\ p_1\cdots p_{r}=m}}1.
\end{equation}
Note that $0\leq b_{r}(m;y,z)\leq r!$. Moreover, $b_{r}(m;y,z)=0$ unless $m=p_1^{\alpha_1}\cdots p_{s}^{\alpha_s}$ where  $y< p_1<p_2<\cdots<p_s< z$ are distinct primes and $\Omega(m)=\alpha_1+\cdots+\alpha_s=r$ (where $\Omega(m)$ is the number of prime divisors of $m$ counting multiplicities). In this case, we have 
\begin{equation}\label{CombFormula}
b_{r}(m;y, z)=\binom{r}{\alpha_1, \dots, \alpha_s}.
\end{equation}
Using this formula, one can easily deduce that if $n$ and $m$ are positive integers with $\Omega(n)=\ell$ and $\Omega(m)=r$ then
\begin{equation}\label{CombInequality}
b_{\ell+r}(mn;y,z)\leq  \binom{\ell+r}{\ell} b_{\ell}(n;y,z)b_{r}(m;y,z).
\end{equation}
Since $z^{2k}\leq x^{1/4}$, then it follows from Lemma \ref{Key} that
\begin{equation}\label{bound}
\begin{aligned}
 \sum_{n\leq  \sqrt{x}}\left(\sum_{y< p< z}\frac{\left(\frac{f(n)}{p}\right)}{p}\right)^{2k}
 &=\sum_{m}\frac{b_{2k}(m;y, z)}{m} \sum_{n\leq \sqrt{x}}\left(\frac{f(n)}{m}\right)
 \ll   \sqrt{x} \sum_{m}\frac{b_{2k}(m;y, z)}{mm_0},
 \end{aligned}
 \end{equation}
 where $m_0$ is the square-free part of $m$.  Let $m$ be such that $\Omega(m)=2k$ and $p\mid m \implies y<p<z$, and write $m=m_1^2m_0$, where $m_0$ is square-free. Put $\Omega(m_1)=\ell$. Then by \eqref{CombInequality} we have 
 \begin{align*}
 b_{2k}(m;y, z) &\leq \binom{2k}{2\ell} b_{2\ell}(m_1^2;y,z)b_{2k-2\ell}(m_0;y,z)\\
 &\leq \binom{2k}{2\ell}\binom{2\ell}{\ell} \left(b_{\ell}(m_1;y,z)\right)^2b_{2k-2\ell}(m_0;y,z)\\
 &\leq \frac{(2k)!}{k!} \binom{k}{\ell}b_{\ell}(m_1;y,z)b_{2k-2\ell}(m_0;y,z),
\end{align*}
since $b_{\ell}(m_1;y,z)\leq \ell!$. 
Therefore, we deduce that 
\begin{align*}
\sum_{m}\frac{b_{2k}(m; y, z)}{mm_0} &\leq \frac{(2k)!}{k!}\sum_{\ell=0}^k\binom{k}{\ell}\sum_{m_1} \frac{b_{\ell}(m_1;y,z)}{m_1^2}\sum_{m_0}\frac{b_{2k-2\ell}(m_0;y,z)}{m_0^2}\\
&\leq\frac{2^k(2k)!}{k!} \left(\sum_{y<p<z}\frac{1}{p^2}\right)^{k} \leq \left(\frac{ck}{y\log y}\right)^k,
\end{align*}
for some positive constant $c>0$ if $y$ is large enough,
since 
\begin{equation}\label{PrimeSumPower}
\sum_{n}\frac{b_{r}(n;y, z)}{n^2}= \left(\sum_{y<p<z}\frac{1}{p^2}\right)^r
\end{equation}
and
$
\sum_{y<p<z} 1/p^2\ll 1/(y\log y)
$
 by the prime number theorem. Inserting this bound in \eqref{bound} completes the proof.
\end{proof}

We are now ready to prove the first part of Theorem \ref{Main}.

\begin{proof}[Proof of Theorem \ref{Main}, part (a)]

Let $z= (\log x)^6$, and $\sqrt{\log x}\leq y \leq (\log x)/8$ be a real number to be chosen later. Then, by Proposition \ref{GranvilleSound} and equation \eqref{FirstCond} it follows that for all but at most $x^{2/5}$ fundamental discriminants $d\in \mathcal{D}(x)$, we have
\begin{equation}\label{FirstCond2}
L(1,\chi_d)= \prod_{p\leq y}\left(1-\frac{\chi_d(p)}{p}\right)^{-1}\exp\left(\sum_{y<p<z}\frac{\chi_d(p)}{p} +O\left(\frac{1}{\sqrt{y}\log y}\right)\right)..
\end{equation}
Furthermore,  taking $k=[\log x/(50\log\log x)]$ in Proposition \ref{LargeMoments} implies that the number of fundamental discriminants $d\in \mathcal{D}(x)$ such that 
$$ \left|\sum_{y<p<z}\frac{\chi_d(p)}{p}\right|>\frac{1}{(\log\log x)^{1/4}}$$ is 
\begin{equation}\label{SecondCond}
\ll \sqrt{x} \left(\frac{\log x}{y\log y(\log\log x)^{1/3}}\right)^{k}.
 \end{equation}
On the other hand, it follows from Corollary  \ref{MoWe} that there are at least $\sqrt{x}e^{-y(1+o(1))}$ fundamental discriminants  $d\in \mathcal{D}(x)$ for which $\chi_d(p)=1$ for all primes $p\leq y$. Therefore, choosing $y= \log x/(2\log\log x)$ we deduce from \eqref{FirstCond2} and \eqref{SecondCond} that there are at least $x^{1/2-1/\log\log x}$ fundamental discriminants  $d\in \mathcal{D}(x)$ such that $\chi_d(p)=1$ for all primes $p\leq y$,  \eqref{FirstCond2} holds and 
$$ \left|\sum_{y<p<z}\frac{\chi_d(p)}{p}\right|\leq \frac{1}{(\log\log x)^{1/4}}.$$ For these $d$, we have by \eqref{FirstCond2} that 
 $$
 L(1,\chi_d)=e^{\gamma} \log\log x \left(1+O\left(\frac{1}{(\log \log x)^{1/4}}\right)\right).
 $$
 Inserting this estimate in \eqref{ClassSpecial} completes the proof. 

\end{proof}

\section{An upper bound for the number of real quadratic fields with extreme class number: proof of Theorem \ref{Main}, part (b)}
In order to bound the number of real quadratic fields $\mathbb{Q}(\sqrt{d})$ with discriminant $d\leq x$ for which the class number $h(d)$ is extremely large (that is, $h(d)$ satisfies \eqref{Extreme}), we shall first bound the number of   small solutions to the Pell equations $m^2-dn^2=\pm 4$. We prove the following lemma. 

\begin{lem}\label{PellCount}
Let $x$ be large,  and let $d\leq x$ be a positive integer. For a real number $\theta\in (1/2, 3/2)$ denote by $S_{\theta}(d)$ the set of positive solutions $(m,n)$ to the Pell equations 
\begin{equation}\label{Pell}
m^2-dn^2=\pm 4,
\end{equation}
such that  $m\leq d^{\theta}.$ Then, we have 
$$ \sum_{d\leq x}|S_{\theta}(d)|\ll \big(x^{1/2}+x^{\theta-1/2}\big)(\log x)^2.$$
\end{lem}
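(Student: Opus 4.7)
My plan is to swap the order of summation. Each triple $(m,n,d)$ counted in $\sum_{d\leq x}|S_{\theta}(d)|$ satisfies $m^{2}-dn^{2}=\pm 4$, so $(m,n)$ together with the choice of sign determines $d=(m^{2}\mp 4)/n^{2}$. Hence
$$\sum_{d\leq x}|S_{\theta}(d)|\ \leq\ 2\cdot\#\bigl\{(m,n)\in\mathbb{Z}_{\geq 1}^{2}\,:\ n^{2}\mid m^{2}\mp 4,\ (m^{2}\mp 4)/n^{2}\leq x,\ m\leq d^{\theta}\bigr\},$$
with the factor $2$ absorbing the two sign choices. The next step is to rewrite the constraints on $d$ as constraints on $(m,n)$: the bound $d\leq x$ gives $m\leq n\sqrt{x}+O(1)$, while $m\leq d^{\theta}$ rearranges to $m\geq n^{\alpha}(1+o(1))$ with $\alpha=2\theta/(2\theta-1)$. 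The two are compatible only for $n\leq x^{\theta-1/2}+O(1)$.

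The contribution from $n=1$ is trivially $O(\sqrt{x})$, since $d=m^{2}\mp 4$ forces $m\leq\sqrt{x+4}$. For $n\geq 2$, I would count $m\leq n\sqrt{x}+O(1)$ with $m^{2}\equiv\pm 4\pmod{n^{2}}$. Writing $\rho(n):=\#\{m\pmod{n^{2}}:m^{2}\equiv\pm 4\pmod{n^{2}}\}$ and using the Chinese Remainder Theorem together with Hensel's lemma, one checks that $\rho(n)\ll 2^{\omega(n)}$: for each odd prime $p\mid n$, the congruence $m^{2}\equiv\pm 4\pmod{p^{2\mathrm{ord}_{p}(n)}}$ has at most $2$ solutions, and the prime $2$ contributes only a bounded factor. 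Therefore
$$\#\bigl\{m\leq n\sqrt{x}+O(1):\,m^{2}\equiv\pm 4\pmod{n^{2}}\bigr\}\ \ll\ \frac{\sqrt{x}}{n}\,2^{\omega(n)}+2^{\omega(n)}.$$

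Summing over $2\leq n\leq x^{\theta-1/2}+O(1)$ and invoking the standard mean-value bounds $\sum_{n\leq N}2^{\omega(n)}/n\ll(\log N)^{2}$ and $\sum_{n\leq N}2^{\omega(n)}\ll N\log N$ then produces
$$\sqrt{x}(\log x)^{2}+x^{\theta-1/2}\log x\ \ll\ \bigl(x^{1/2}+x^{\theta-1/2}\bigr)(\log x)^{2},$$
which combined with the $n=1$ term yields the stated bound.

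The main obstacle is essentially bookkeeping: pinning down the precise range for $n$ induced by the coupling of the two inequalities $d\leq x$ and $m\leq d^{\theta}$, and verifying the bound $\rho(n)\ll 2^{\omega(n)}$ with a careful (but routine) case analysis at the prime $2$, where $\pm 4$ is not coprime to the modulus. Once these two ingredients are in hand, the rest is summation by standard multiplicative number theory.
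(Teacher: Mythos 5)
Your proposal is correct and matches the paper's own argument in all essentials: both fix $n\leq x^{\theta-1/2}$, count $m\leq n\sqrt{x}+O(1)$ with $m^{2}\equiv\pm 4\pmod{n^{2}}$ via the multiplicative (divisor-type) bound on the number of residue classes, and sum over $n$ to get $(x^{1/2}+x^{\theta-1/2})(\log x)^{2}$. The only differences are cosmetic (your $2^{\omega(n)}$ versus the paper's multiplicative $\ell(n^{2})$, and your slightly different derivation of the range of $n$), so there is nothing to add.
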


\begin{proof} Let  $P_{\theta}(d)$  be the set of solutions $(m,n)$ to the positive Pell equation
$m^2-dn^2= 4,$ such that  $m\leq d^{\theta}.$ Similarly, let $N_{\theta}(d)$  be the set of solutions $(m,n)$ to the negative Pell equation
$m^2-dn^2= -4,$ such that  $m\leq d^{\theta}.$ We shall only bound $\sum_{d\leq x}|P_{\theta}(d)|$ since the treatment for $\sum_{d\leq x}|N_{\theta}(d)|$ is similar. Let $(m,n)\in P_{\theta}(d)$. Then, note that 
$dn^2\leq m^2\leq d^{2\theta}$, and hence $n\leq d^{\theta-1/2}$. Furthermore, for a fixed $n\leq x^{\theta-1/2}$, if $(m,n)\in P_{\theta}(d)$ for some $d\leq x$  then $m\leq n\sqrt{x}+2$ and $m^2\equiv 4 \pmod {n^2}$. Therefore, we deduce that 
$$ \sum_{d\leq x}|P_{\theta}(d)|\leq \sum_{n\leq x^{\theta-1/2}}|\{m\leq n\sqrt{x}+2, \text{ such that } m^2
\equiv 4 \pmod {n^2}\}|.$$
 Let $\ell(q)$ be the number of solutions $m \pmod q$ of the congruence $m^2\equiv 4 \pmod q$. Then, 
$\ell(q)$ is a multiplicative function, and
$$ \ell(p^k)\leq \begin{cases} 2  &\text{ if } p > 2,\\ 4 &\text{ if } p=2. \end{cases}$$
Hence, we derive
\begin{align*}
&\sum_{n\leq x^{\theta-1/2}} |\{m\leq n\sqrt{x}+2, \text{ such that } m^2
\equiv 4 \pmod {n^2}\}| \\
& \ll \sum_{n\leq x^{\theta-1/2}}\ell(n^2)\left( \frac{\sqrt{x}}{n}+1\right)\ll 
\big(x^{1/2}+x^{\theta-1/2}\big)  \sum_{n\leq x^{\theta-1/2}}\frac{\ell(n^2)}{n}.
\end{align*}
The lemma follows upon noting that 
$$ 
\sum_{n\leq x^{\theta-1/2}}\frac{\ell(n^2)}{n} \ll \prod_{2<p<x} \left(1-\frac{1}{p}\right)^{-2}\ll (\log x)^2.
$$
\end{proof}
The second ingredient in the proof of part (b) of Theorem 1.2 is to show that $L(1,\chi_d)\leq (2e^{\gamma}+o(1))\log\log d$ for all but at most $x^{1/2+o(1)}$ fundamental discriminants $0< d<x$. To this end, 
we shall use Heath-Brown's quadratic large sieve to show that Proposition \ref{GranvilleSound} can be improved if we restrict our attention to quadratic characters. More precisely, we prove that $L(1,\chi_d)$ can be approximated by an Euler product over the primes $p\leq (\log x)^A$, for all but at most $x^{1/A+o(1)}$ fundamental discriminants $0< d<x$.

\begin{pro}\label{AlmostAllShort}
Let $A>1$ be fixed. Then for all but at most $x^{1/A+o(1)}$ fundamental discriminants $0<d<x$ we have 
$$ 
L(1,\chi_d)=\prod_{p\leq (\log x)^A}\left(1-\frac{\chi_d(p)}{p}\right)^{-1}\left(1+O\left(\frac{1}{\log\log x}\right)\right).
$$
\end{pro}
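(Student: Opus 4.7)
The plan is to follow the proof strategy of Proposition \ref{GranvilleSound} due to Granville-Soundararajan, but to exploit that our characters $\chi_d$ are quadratic in order to apply Heath-Brown's quadratic large sieve in place of the standard multiplicative large sieve. The improvement of the exceptional exponent from $2/A$ to $1/A$ is explained by the fact that the family of fundamental discriminants $0<d\leq x$ has cardinality $\asymp x$, whereas the family of all primitive characters of modulus $\leq Q$ has cardinality $\asymp Q^2$, and Heath-Brown's large sieve captures precisely this square-root savings.

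Set $z=(\log x)^A$. The first step is a contour-integration argument expressing $\log L(1,\chi_d)$ as the truncated Dirichlet series $\sum_{p^k\leq z}\chi_d(p)^k/(kp^k)$ plus a contribution from the zeros of $L(s,\chi_d)$ lying in the rectangle
$$\mathcal{R}_\eta(T) := \{ s : \textup{Re}(s) \geq 1-\eta, \ |\textup{Im}(s)| \leq T\},$$
with $\eta := c/\log\log x$ and $T$ a suitable power of $\log x$. If $L(s,\chi_d)$ has no zeros in $\mathcal{R}_\eta(T)$, the contribution of these zeros is $O(z^{-\eta})=O(1/\log\log x)$, and the higher prime powers ($k\geq 2$) contribute $O(1)$ and are absorbed into the Euler product, yielding
$$L(1,\chi_d) = \prod_{p\leq z}\left(1-\frac{\chi_d(p)}{p}\right)^{-1}\left(1+O\left(\frac{1}{\log\log x}\right)\right).$$

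It then remains to bound the number of fundamental discriminants $0<d\leq x$ for which $L(s,\chi_d)$ does have a zero in $\mathcal{R}_\eta(T)$. I would use the standard zero-detection argument: if $L(\rho,\chi_d)=0$ with $\rho=\beta+i\gamma\in\mathcal{R}_\eta(T)$, then a suitable mollified Dirichlet polynomial $\sum_{n\leq N}a_n\chi_d(n)n^{-\rho}$ (with $a_n$ essentially bounded and $N$ a small power of $\log x$) is forced to be $\gg 1$ in absolute value. Squaring, summing over fundamental $d\leq x$, and invoking Heath-Brown's quadratic large sieve
$$\sum_{d\leq x}^{\ast} \left|\sum_{n\leq N}a_n\chi_d(n)\right|^2 \ll_\varepsilon (xN)^{\varepsilon}(x+N)\sum_{n\leq N}|a_n|^2,$$
together with a dyadic decomposition of the height $|\gamma|\leq T$, yields an upper bound of $x^{1/A+o(1)}$ on the number of such exceptional $d$. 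The presence of the factor $(x+N)$, rather than the $(Q^2+N)$ of the standard large sieve, is exactly what converts the Granville-Soundararajan exponent $2/A$ into $1/A$.

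The main technical obstacle will be the precise calibration of the three parameters $T$, $N$, and $\eta$, so that the zero-free region error is $O(1/\log\log x)$ while the length $N$ of the detection polynomial remains small enough that the quadratic large sieve delivers the bound $x^{1/A+o(1)}$ uniformly over all zeros in $\mathcal{R}_\eta(T)$. Possible Siegel zeros of quadratic $L$-functions very close to $s=1$ must also be handled separately, but by Siegel's theorem they arise for at most $O_\varepsilon((\log x)^{\varepsilon})$ discriminants $d\leq x$, which is absorbed into the claimed exceptional set.
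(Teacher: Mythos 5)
Your central quantitative step does not work as described. You propose to detect a zero $\rho$ via a mollified polynomial $\sum_{n\le N}a_n\chi_d(n)n^{-\rho}\gg 1$ with $a_n$ essentially bounded and $N$ a small power of $\log x$, and then to square and apply Heath-Brown's sieve. But Heath-Brown's bound is $(xN)^{\epsilon}(x+N)\sum_{n_1n_2=\square}|a_{n_1}a_{n_2}|$: with $N=(\log x)^{O(1)}$ the factor $(x+N)$ is already $\asymp x$, and the diagonal terms $n_1=n_2$ alone make the coefficient sum at least a negative power of $\log x$, never a negative power of $x$. Hence a single squaring can only show that at most $x^{1-o(1)}$ discriminants are exceptional, nowhere near $x^{1/A+o(1)}$, and no calibration of $T$, $N$, $\eta$ repairs this. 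To extract a power saving in $x$ one must raise the detecting polynomial to a power $2k$ with $k\asymp \log x/\log\log x$, so that its effective length is comparable to $x$; the crux is then the square-condition sum $\sum_{n_1\cdots n_{2k}=\square}\prod_i |a_{n_i}|/n_i$ for such huge $k$, which for mollifier-type (divisor-bounded) coefficients is a serious combinatorial problem that your sketch treats as routine. In effect you are proposing to prove a zero-density estimate for the quadratic family with exponent $(1+o(1))(1-\sigma)$ in $x$ near $\sigma=1$; the known quadratic zero-density estimates (including those derived in Heath-Brown's paper) have larger exponents, of the shape $3(1-\sigma)$, which would only yield $x^{3/A}$, so this strength would be new and nothing in the sketch delivers it. Two further inaccuracies: with $\eta=c/\log\log x$ one has $z^{-\eta}=e^{-cA}=O(1)$, not $O(1/\log\log x)$ (and the explicit-formula error also carries a factor of $\log d$), so the zero-free rectangle must in fact have width $\asymp 1/A$, a constant, which makes the required density estimate harder rather than easier; and Siegel's theorem bounds the location of a possible real zero for each individual $d$, not the number of $d\le x$ possessing a real zero with $\beta>1-c/A$, so those zeros must be counted by the same density estimate as all others.

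The paper's route avoids any new zero-density input, and you may find the contrast instructive. It applies Granville--Soundararajan's Proposition (Proposition \ref{GranvilleSound}) with parameter $2A$ in place of $A$, so the exceptional set is already $x^{2/(2A)+o(1)}=x^{1/A+o(1)}$ while the Euler product runs over $p\le(\log x)^{2A}$. The only remaining task is to show that $\sum_{(\log x)^A<p<(\log x)^{2A}}\chi_d(p)/p=O(1/\log\log x)$ outside at most $x^{1/A+o(1)}$ fundamental discriminants, and this is done by bounding the $2k$-th moment, with $k\approx \log x/(A\log\log x)$, of the dyadic prime sums via Heath-Brown's sieve; there the square-condition combinatorics is manageable precisely because the coefficients are supported on primes (the multinomial bounds for $b_k(m;z_j,z_{j+1})$). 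If you want to salvage your approach, you would need to carry out the high-moment version of your zero-detection with the same care, which is substantially more delicate than the prime-supported case.
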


\begin{proof} First, it follows from Proposition \ref{GranvilleSound} that 
$$ L(1,\chi_d)= \prod_{p\leq (\log x)^{2A}}\left(1-\frac{\chi_d(p)}{p}\right)^{-1}\left(1+O\left(\frac{1}{\log\log x}\right)\right),$$
for all except at most $x^{1/A+o(1)}$ fundamental discriminants $0<d<x$. To prove the result, we are going to show that
\begin{equation}\label{CrucialBound}
 \sum_{(\log x)^A<p< (\log x)^{2A}}\frac{\chi_d(p)}{p}=O\left(\frac{1}{\log\log x}\right),
 \end{equation}
 for all but at most $x^{1/A+o(1)}$ fundamental discriminants $0<d<x$. To this end, we will exploit Heath-Brown's quadratic large sieve (see Corollary 2 of \cite{HB}) which asserts that 
\begin{equation}\label{HeathBrownLarge}
 \sideset {}{^\flat}\sum_{0<d< x} \left| \sum_{n\leq N} a(n)\chi_d(n)\right|^2
\ll_{\epsilon} (xN)^{\epsilon}(x+N)\sum_{\substack{n_1, n_2\leq N\\ n_1n_2=\square}}|a(n_1)a(n_2)|.
\end{equation}
where the $\sideset {}{^\flat}\sum$ is taken over fundamental discriminants, and the $a(n)$ are arbitrary complex numbers.

For $0\leq j\leq J:=[A\log\log x/\log 2]$, we define $z_j=2^j(\log x)^A$, and put $z_{J+1}=(\log x)^{2A}$. Also, we let $k=[(\log x)/(A\log\log x)]+1$ so that $z_j^k\geq x$ for all $0\leq j\leq J+1$.  Now, similarly to \eqref{Expansion} we have 
$$
\left(\sum_{z_j<p<z_{j+1}}\frac{\chi_d(p)}{p}\right)^k=\sum_{z_j^k<m<z_{j+1}^k}\frac{b_{k}(m;z_j,z_{j+1})\chi_d(m)}{m},
$$
where the coefficient $b_{k}(m;z_j,z_{j+1})$ is defined in \eqref{Combinatorics}.
 Then, by \eqref{HeathBrownLarge} we obtain
\begin{equation}\label{Heath-BrownSieve}
\begin{aligned}
 \sideset {}{^\flat}\sum_{0<d< x}\left(\sum_{z_j<p<z_{j+1}}\frac{\chi_d(p)}{p}\right)^{2k}
&=  \sideset {}{^\flat}\sum_{0<d< x} \left(\sum_{z_j^k<n<z_{j+1}^k}\frac{b_{k}(n;z_j,z_{j+1})\chi_d(n)}{n}\right)^2\\
& \ll_{\epsilon}(z_{j+1})^{k(1+\epsilon)}\sum_{\substack{z_j^k<n, m<z_{j+1}^k\\ mn=\square}} \frac{b_{k}(m;z_j,z_{j+1})b_{k}(n;z_j,z_{j+1})}{mn}.
\end{aligned}
\end{equation}
Let $m$ and $n$ be positive integers such that $\Omega(m)=\Omega(n)=k$ and put $d=(m,n)$. Also, put $n=dn_1$ and $m=dm_1$ where $(m_1,n_1)=1.$ Since $mn=\square$ then both $m_1$ and $n_1$ are squares.  Let $n_1=\ell_1^2$ and $m_1=\ell_2^2$, and put $s=\Omega(\ell_1)$. Since $\Omega(n)=\Omega(m)=k$, then $\Omega(\ell_2)=s$ and $\Omega(d)=k-2s$. Therefore, using \eqref{CombInequality} we obtain
\begin{align*}
&b_{k}(n;z_j,z_{j+1})b_{k}(m;z_j,z_{j+1})\\
&\leq \binom{k}{2s}^2
b_{2s}(\ell_1^2;z_j,z_{j+1})b_{2s}(\ell_2^2;z_j,z_{j+1})\big(b_{k-2s}(d;z_j,z_{j+1})\big)^2\\
&\leq \binom{k}{2s}^2\binom{2s}{s}^2\Big(b_{s}(\ell_1;z_j,z_{j+1})b_{s}(\ell_2;z_j,z_{j+1})b_{k-2s}(d;z_j,z_{j+1})\Big)^2\\
& \leq k! \binom{k}{s, s, k-2s} b_{s}(\ell_1;z_j,z_{j+1})b_{s}(\ell_2;z_j,z_{j+1})b_{k-2s}(d;z_j,z_{j+1}),
\end{align*}
since $b_r(e;z_j,z_{j+1})\leq r!$ for any positive integers $r$ and $e$. 
Thus, we deduce
\begin{align*}
&\sum_{\substack{z_j^k<n, m<z_{j+1}^k\\ mn=\square}}\frac{b_{k}(n;z_j,z_{j+1})b_{k}(m;z_j,z_{j+1})}{mn}\\
&\leq  k! \sum_{0\leq s\leq k/2}\binom{k}{s, s, k-2s}\left(\sum_{d}\frac{b_{k-2s}(d;z_j,z_{j+1})}{d^2}\right) \left(\sum_{\ell} \frac{b_{s}(\ell;z_j,z_{j+1})}{\ell^2}\right)^2\\
&\leq  3^k k!\left(\sum_{z_j<p<z_{j+1}}\frac{1}{p^2}\right)^{k},
\end{align*}
by \eqref{PrimeSumPower}. Inserting this bound in \eqref{Heath-BrownSieve}  and using that $\sum_{p>z_j}1/p^2\ll 1/(z_j\log z_j)$ yields
$$\sideset {}{^\flat}\sum_{0<d< x}\left(\sum_{z_j<p<z_{j+1}}\frac{\chi_d(p)}{p}\right)^{2k}
\ll_{\epsilon}\left(z_{j+1}^{\epsilon} k\right)^k\ll_{\epsilon}\left((\log x)^{2\epsilon A}k\right)^k.$$ 
Therefore, the number of fundamental discriminants $0<d<x$ such that 
$$\sum_{z_j<p<z_{j+1}}\frac{\chi_d(p)}{p}>\frac{1}{A(\log\log x)^2},$$
is 
$$
\ll_{\epsilon} \Big(A^2(\log\log x)^4(\log x)^{2\epsilon A}k\Big)^k\ll_{\epsilon}  x^{1/A+ 3\epsilon}.
$$
Thus, we deduce that \eqref{CrucialBound} holds for all but at most $O_{\epsilon}(x^{1/A+ 3\epsilon}\log\log x)$ fundamental discriminants $0<d<x$, as desired.

\end{proof}

\begin{proof}[Proof of Theorem \ref{Main}, part (b)]
Taking $A=2$ in Proposition \ref{AlmostAllShort}, we deduce that for all but at most $x^{1/2+o(1)}$ fundamental discriminants $x^{1/4}<d<x$, we have 
\begin{equation}\label{ALMOSTBoundL1}
L(1,\chi_d)\leq (2 e^{\gamma} +o(1))\log\log d.
\end{equation}
Now, the class number formula \eqref{ClassReal} implies that if $d$ satisfies \eqref{ALMOSTBoundL1} then
$$ h(d)\leq (2e^{\gamma}+o(1))\sqrt{d}\cdot\frac{\log\log d}{\log \varepsilon_d}.$$
Therefore, if $d$ is a fundamental discriminant such that $x^{1/4}<d<x$, $L(1, \chi_d)$ satisfies $\eqref{ALMOSTBoundL1}$ and $h(d)$ satisfies the bound \eqref{Extreme}, then 
$ \varepsilon_d\leq d^{1+o(1)}.$ 
Now, recall that 
$\varepsilon_d= (m+n\sqrt{d})/2>m/2,$
where $(m,n)$ is a solution to the Pell equations \eqref{Pell}. Thus, we deduce from Lemma \ref{PellCount} that the number of these fundamental discriminants is at most 
 $x^{1/2+o(1)}$, which completes the proof.
\end{proof}

\section*{Aknowledgements}

I would like to thank Andrew Granville for an interesting suggestion and for Lemma 3.1. I am also grateful to Igor Shparlinski for helpful discussions. I would also like to thank the anonymous referee for a careful reading of the paper and for a useful suggestion which removed the assumption of GRH from the statement of Theorem 1.2, part (b). The author is partially supported by a Discovery Grant from the Natural Sciences and Engineering Research Council of Canada.


\begin{thebibliography}{DDDD}

\bibitem[1] {Ch} S. Chowla, 
\emph{Improvement of a theorem of Linnik and Walfisz.}
 Proc. London Math. Soc. 50 (1949), 423--429.

\bibitem[2]{Du1} W. Duke, 
\emph{Extreme values of Artin L-functions and class numbers.} 
Compositio Math. 136 (2003), no. 1, 103--115.

\bibitem[3]{Du2} W. Duke, 
\emph{Number fields with large class group.} Number theory, 117--126, CRM Proc. Lecture Notes, 36, Amer. Math. Soc., Providence, RI, 2004. 

\bibitem[4] {GrSo1} A. Granville and K. Soundararajan,
\emph{Large character sums.} 
 J. Amer. Math. Soc. 14 (2001), no. 2, 365--397.


\bibitem[5] {GrSo2} A. Granville and K. Soundararajan,
\emph{The distribution of values of $L(1, \chi_d)$.} 
Geom. Funct. Anal. 13 (2003), no. 5, 992--1028. 


\bibitem[6] {HB} D. R. Heath-Brown, 
\emph{A mean value estimate for real character sums.}
Acta Arith. 72 (1995), no. 3, 235--275. 

\bibitem[7]{Li}  J. E. Littlewood,
\emph{On the class number of the corpus $P(\sqrt{-k})$.} 
Proc. London Math. Soc. 27 (1928), 358--372.

\bibitem[8] {MoWe} H. L. Montgomery and  J. P. Weinberger, 
\emph{Real quadratic fields with large class number.} 
Math. Ann. 225 (1977), no. 2, 173--176. 

\bibitem[9] {St} T. Storer,
\emph{Cyclotomy and difference sets.} 
Lectures in Advanced Mathematics, No. 2 Markham Publishing Co., Chicago, Ill. 1967 vii+134 pp.




\end{thebibliography}
\end{document}